\newtheoremstyle{theorems}
{11pt}
{11pt}
{\itshape}
{}
{\bfseries}
{}
{.5em}
{}
\theoremstyle{theorems}
\newtheorem{theorem}{Theorem}[section]
\newtheorem{lemma}[theorem]{Lemma}
\newtheorem{proposition}[theorem]{Proposition}
\newtheoremstyle{definition}
{11pt}
{11pt}
{}
{}
{\bfseries}
{}
{.5em}
{}
\theoremstyle{definition}
\newtheorem{definition}[theorem]{Definition}
\newtheorem{conjecture}[theorem]{Conjecture}
\newtheorem{example}[theorem]{Example}
\newtheorem{remark}[theorem]{Remark}
\newcommand{\rk}{{\rm rk}}
\newcommand{\des}{{\rm des}}
\newcommand{\Des}{{\rm Des}}
\newcommand{\link}{{\rm link}}
\newcommand{\sd}{{\rm sd}}
\newcommand{\cC}{{\mathcal C}}
\newcommand{\fF}{{\mathcal F}}
\newcommand{\gG}{{\mathcal G}}
\newcommand{\iI}{{\mathcal I}}
\newcommand{\jJ}{{\mathcal J}}
\newcommand{\lL}{{\mathcal L}}
\newcommand{\fS}{{\mathfrak S}}
\newcommand{\ZZ}{{\mathbb Z}}
\newcommand{\M}{{\mathsf{M}}}
\newcommand{\U}{{\mathsf{U}}}
\newcommand{\uH}{\underline{\mathrm{H}}}
\renewcommand{\H}{\mathrm{H}}
\newcommand{\I}{{\mathrm{I}}}
\newcommand{\uDelta}{\underline{\Delta}}
\newcommand{\sm}{{\smallsetminus}}
\begin{document}
\title[Decompositions of augmented Bergman complexes]
{A convex ear decomposition of the augmented Bergman 
complex of a matroid}

\author{Christos~A.~Athanasiadis}
\address{Department of Mathematics\\
National and Kapodistrian University of Athens\\
Panepistimioupolis\\ 15784 Athens, Greece}
\email{caath@math.uoa.gr}

\author{Luis~Ferroni}
\address{Department of Mathematics, University of Pisa, Pisa, Italy}
\email{luis.ferroni@unipi.it}

\date{\today}
\thanks{\textit{Mathematics Subject Classifications}: 05B35, 05E45}
\keywords{Matroid, geometric lattice, augmented Bergman complex, Chow ring}

\begin{abstract}
In recent work of Braden, Huh, Matherne, Proudfoot and Wang, a class of 
simplicial complexes associated to matroids, called augmented Bergman 
complexes, was introduced. The present article concerns the face 
enumeration of these complexes. We 
prove that the augmented Bergman complex of any matroid admits a convex 
ear decomposition and deduce that augmented Bergman complexes are doubly 
Cohen--Macaulay and that they have top-heavy $h$-vectors. We provide some 
formulas for computing the $h$-polynomials of these complexes and exhibit 
examples which show that, in general, they are neither log-concave 
nor unimodal. 
\end{abstract}

\maketitle

\section{Introduction}
\label{sec:intro}

Two well-studied objects that one may associate to a matroid $\M$ are the 
\emph{independence complex}, denoted by $\I_{\M}$, and the \emph{Bergman 
complex}, denoted by $\uDelta_{\M}$. Recently, these simplicial complexes 
have received considerable attention. Despite their simple definitions, 
several questions about their face enumeration have turned out to be very 
challenging. As a notable example, the recent breakthroughs by Br\"and\'en 
and Huh \cite{BH20} and by Anari, Liu, Oveis-Gharan and Vinzant 
\cite{ALOV24} proved a long-standing conjecture by Mason, claiming that 
the entries of the $f$-vector of $\I_{\M}$ form an \emph{ultra 
log-concave} sequence. Moreover, the $h$-polynomial of the Bergman complex 
$\uDelta_{\M}$ was conjectured to be real-rooted (therefore, also ultra 
log-concave) in recent work of the first named author and 
Kalampogia-Evangelinou~\cite{AK23}. The conjecture was verified there
in interesting special cases, such as partition lattices and their 
standard type B analogues (also known as braid matroids of types A and B,
respectively) and subspace lattices.

In the \emph{singular} Hodge theory of matroids, developed by Braden, Huh, 
Matherne, Proudfoot and Wang \cite{BHMPW1,BHMPW2}, an instrumental role is 
played by a third simplicial complex, called the \emph{augmented Bergman 
complex}. This simplicial complex `interpolates' between $\I_{\M}$ and 
$\uDelta_{\M}$ and contains both as subcomplexes. The focus of this paper 
is its face enumeration.

Throughout this paper $\M$ will denote a matroid on the ground set $E = 
[n] := \{1, 2,\dots,n\}$, with lattice of flats $\lL(\M)$. We let $U = 
\{u_i : i \in E\}$ and $V_{\M} = \{v_F : F \in \lL(\M) \sm \{E\} \}$ be 
disjoint sets which are in one-to-one correspondence with $E$ and 
$\lL(\M) \sm \{E\}$, respectively.
\begin{definition} \label{def:augmented}
The \emph{augmented Bergman complex} of $\M$, denoted by $\Delta_{\M}$, 
is the simplicial complex on the ground set $U \cup V_{\M}$ whose faces 
are the sets of the form 
\begin{equation} \label{eq:Omega-faces}
\{u_i : i \in S \} \cup \{ v_{F_1}, v_{F_2},\dots,v_{F_k} \},
\end{equation}
where $S \in \I_{\M}$ is an independent set of $\M$, the $F_1, 
F_2,\dots,F_k \in \lL(\M) \sm \{E\}$ are proper flats and either 
$S \subseteq F_1 \subset F_2 \subset \cdots \subset F_k$ and $k \ge 1$, 
or $S \subseteq E$ and $k = 0$. 
\end{definition}

Augmented Bergman complexes were introduced in \cite{BHMPW1}. 
Their combinatorial and topological properties were studied in 
\cite{BKR22, AJ23}, where augmented Bergman complexes were shown to
be shellable and, in fact, vertex decomposable, respectively. In
particular, augmented Bergman complexes are Cohen--Macaulay (over 
$\ZZ$ and any field). As a well known consequence of either 
shellability or Cohen--Macaulayness (see Sections~II.3 and~III.2 
of \cite{Sta96}), their $h$-vectors have nonnegative entries.

The main contribution of this paper is a new list of inequalities 
that the $h$-vector of $\Delta_{\M}$ satisfies. The main technical 
tool employed is that of a convex ear decomposition (see 
Section~\ref{sec:ced}), a property that has been established 
for the independence complex $\I_{\M}$ by Chari \cite{Ch97} and for 
the Bergman complex $\uDelta_{\M}$ by Nyman and Swartz \cite{NS04}. 
This property, in addition to yielding inequalities for $h$-vectors, 
also establishes the double Cohen--Macaulayness of the complex: a 
simplicial complex $\Delta$ is said to be 
\emph{doubly Cohen--Macaulay} if it is Cohen--Macaulay and removing 
any vertex results in a subcomplex that 
is Cohen--Macaulay of the same dimension as $\Delta$ (see 
\cite[Section~III.3]{Sta96}).

\begin{theorem} \label{thm:main}
    The augmented Bergman complex $\Delta_{\M}$ has a 
    convex ear decomposition for every matroid $\M$. 
    In particular, $\Delta_{\M}$ is doubly Cohen--Macaulay,
    \begin{equation} \label{eq:h-vector-ineq-a}
    h_0(\Delta_{\M}) \le h_1(\Delta_{\M}) \le \cdots \le
    h_{\lfloor d/2 \rfloor}(\Delta_{\M})
    \end{equation}
    and
    \begin{equation} \label{eq:h-vector-ineq-b}
    h_i(\Delta_{\M}) \le h_{d-i}(\Delta_{\M})
    \end{equation}
    for $0 \le i \le d/2$, where $d$ is the rank of $\M$ and
		$(h_i(\Delta_{\M}))_{0 \le i \le d}$ is the $h$-vector 
		of $\Delta_{\M}$.
\end{theorem}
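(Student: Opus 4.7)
The plan is to construct an explicit convex ear decomposition of $\Delta_{\M}$ that combines and extends the two existing decompositions: Chari's \cite{Ch97} for $\I_{\M}$ and Nyman--Swartz's \cite{NS04} for $\uDelta_{\M}$. The key observation is that every facet of $\Delta_{\M}$ has the form $\{u_i : i \in S\} \cup \{v_{F_1}, \ldots, v_{F_k}\}$ with $|S| + k = d = \rk(\M)$, $F_k$ of rank $d - 1$, and $F_1 \supseteq S$, so each facet interpolates between the purely independent case ($k = 0$) and the purely flag case ($S = \emptyset$). The two known decompositions handle the endpoints, and a uniform construction must glue across the intermediate values of $k$.

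The construction proceeds in three steps. First, I fix a linear order on $E$ and a compatible order on $\lL(\M)$, and use them to impose a shelling order on the facets of $\Delta_{\M}$ refining the shellings of \cite{BKR22, AJ23}; this assigns to each facet $\sigma$ a restriction face $\mathcal{R}(\sigma) \subseteq \sigma$. Second, I group facets into ears by a joint invariant that couples the (lex-first) basis containing $S$ with a designated saturated flag of proper flats above $\mathrm{cl}(S)$. Third, for each ear I realize it as a subcomplex of the boundary of a simplicial $d$-polytope; the candidate ambient polytope is a join or product of a Chari-type cross-polytope (accounting for the independent-set coordinates) with a Nyman--Swartz-type polytope (accounting for the flag-of-flats coordinates), so that the mixed facets of the ear are accommodated by the combined structure.

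The main obstacle is establishing these polytopal realizations compatibly with the gluing prescribed by the shelling order. Both predecessor constructions rely on delicate inductive arguments, and here a single construction must respect the fibered structure in which every face of $\Delta_{\M}$ projects onto a face of $\uDelta_{\M}$ by forgetting the $u_i$ coordinates. I expect the argument to proceed by induction on $\rk(\M)$ together with $|E|$, exploiting the facts that the link of $u_e$ in $\Delta_{\M}$ is isomorphic to $\Delta_{\M / e}$ and that the link of $v_F$ decomposes as the simplicial join $\Delta_{\M|_F} * \uDelta_{\M/F}$. In this way each new ear should restrict to an ear of a smaller augmented Bergman complex, and its polytopal realization can be obtained from previously constructed ones.

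Once the convex ear decomposition is in place, the remaining conclusions follow from standard results: the double Cohen--Macaulay property is a consequence of Swartz's theorem on convex ear decompositions, and the $h$-vector inequalities \eqref{eq:h-vector-ineq-a} and \eqref{eq:h-vector-ineq-b} are classical corollaries of the existence of such a decomposition.
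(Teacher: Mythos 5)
There is a genuine gap: what you have written is a plan whose decisive steps are explicitly deferred (``the main obstacle is establishing these polytopal realizations\dots'', ``I expect the argument to proceed by induction\dots''), and the one concrete structural guess you do make is not the right one. A facet of $\Delta_{\M}$ has the form $u_S\cup\{v_{F_1},\dots,v_{F_k}\}$ with the independent part and the flag part coupled by the condition $S\subseteq F_1$, so the ears cannot be realized as a join or product of a Chari-type cross-polytope with a Nyman--Swartz-type polytope: a join would let the two parts vary independently, which they do not. The correct ambient polytope is the simplicial \emph{stellohedron}: for each basis $B$ the subcomplex $\Omega_B$ of all faces $u_S\cup\{v_{\overline{S_1}},\dots,v_{\overline{S_k}}\}$ with $S\subseteq S_1\subset\cdots\subset S_k\subset B$ is isomorphic to the boundary of a $d$-dimensional stellohedron, and $\Delta_{\M}=\bigcup_B\Omega_B$. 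Note in particular that Chari's decomposition of $\I_{\M}$ plays no role: the ears are indexed by bases alone, not by pairs consisting of a basis and a flag, and there is one polytopal sphere per basis rather than a finer grouping extracted from restriction faces of a shelling. (A shelling of $\Delta_{\M}$ by itself does not yield a convex ear decomposition, so refining the shellings of \cite{BKR22,AJ23} is not a substitute for exhibiting the polytopes.)

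The part of your plan that survives is the reliance on Nyman--Swartz, but it enters differently from how you propose. Ordering the bases lexicographically, one shows that $\Omega_{B_k}\cap\bigl(\bigcup_{i<k}\Omega_{B_i}\bigr)=\Omega_{B_k}(\Gamma_k)$, where $\Gamma_k=\sd_{B_k}\cap\bigl(\bigcup_{i<k}\sd_{B_i}\bigr)$ is exactly the intersection arising in the Nyman--Swartz convex ear decomposition of the Bergman complex, and is a shellable $(d-2)$-dimensional subcomplex of the sphere $\sd_{B_k}$. The missing ingredient, which your proposal does not supply, is the passage from shellability of $\Gamma_k$ inside $\sd_{B_k}$ to the statement that $\Omega_{B_k}(\Gamma_k)$ is a $(d-1)$-dimensional shellable ball inside the stellohedron boundary; this is the content of Lemma~\ref{lem:shellability-a}, proved by ordering the facets of $\Omega_{B_k}(\Gamma_k)$ first by the size of their $U$-part and then by shellings of links of chains in $\Gamma_k$. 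With that lemma in hand, Remark~\ref{rem:ced} converts the sequence of spheres $\Omega_{B_1},\dots,\Omega_{B_m}$ into a convex ear decomposition, and the consequences for double Cohen--Macaulayness and the top-heavy inequalities follow from \cite{Sw06} and \cite{Ch97} as you say. As it stands, your argument does not establish the theorem.
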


Inequalities \eqref{eq:h-vector-ineq-a} and \eqref{eq:h-vector-ineq-b}, 
often referred to as the \emph{top-heavy} inequalities for the $h$-vector 
of $\Delta_{\M}$, follow from the existence of a convex ear decomposition
\cite{Ch97}. They have been established for several fundamental 
enumerative invariants of matroids. For instance, as already mentioned, 
the independence complex (after removing coloops) and the Bergman complex 
of a matroid have convex ear decompositions. As a result, they have 
top-heavy (or strongly flawless, in other terminology) $h$-vectors and 
are doubly Cohen--Macaulay. The $h$-vector of the broken circuit 
complex, another simplicial complex of fundamental importance associated 
to a matroid, is top-heavy thanks to some deep results in the intersection 
theory of Chern--Schwartz--MacPherson cycles of matroids by Ardila, Denham 
and Huh \cite{ADH23}; see also \cite{KV16} for an earlier partial result. 
Furthermore, the sequence of Whitney numbers of the second kind of a 
matroid is top-heavy thanks to an important result by Braden, Huh, 
Matherne, Proudfoot and Wang \cite[Theorem~1.1]{BHMPW2}.

The content and structure of this paper is as follows. 
Theorem~\ref{thm:main} is proven in Section~\ref{sec:proof}; the proof 
may be viewed as an extension of the proof of the corresponding result 
of~\cite{NS04} for the Bergman complex. Section~\ref{sec:complexes} 
explains relevant definitions and background on simplicial complexes, 
and matroid complexes in particular. Moreover, it includes a technical 
lemma (see Lemma~\ref{lem:shellability-a}) which is crucial for the 
proof of Theorem~\ref{thm:main} and provides useful formulas for the 
$f$-vector and the $h$-vector of the augmented Bergman complex (see 
Propositions~\ref{prop:fOmega} and~\ref{prop:hOmega}). 
Section~\ref{sec:log-concavity} exhibits explicit counterexamples showing that top-heaviness cannot be upgraded to unimodality (or log-concavity).
Section~\ref{sec:chow} proves 
an augmented analogue of a formula conjectured by Hameister, Rao and 
Simpson \cite{HRS21} for the $h$-polynomial of the Bergman complex 
in the special case of uniform matroids.

\section{Simplicial complexes and matroids}
\label{sec:complexes}

This section includes background material on simplicial complexes, 
especially those related to matroids, as well as some preliminary 
results, which are essential in understanding Theorem~\ref{thm:main}
and its proof. We will assume familiarity with basic notions about 
simplicial complexes and matroids and suggest \cite{Bj92, Oxl11, 
Sta96} as general references.

\subsection{Face enumeration}
Let $\Delta$ be a $(d-1)$-dimensional simplicial complex and 
$f_{i-1}(\Delta)$ be the number of $(i-1)$-dimensional faces of 
$\Delta$, for $i \ge 0$. The sequence 
$(f_{-1}(\Delta),\ldots,f_{d-1}(\Delta))$ is called the 
\emph{$f$-vector} of $\Delta$. The \emph{$f$-polynomial} of 
$\Delta$ is defined as
\[ f(\Delta,x) = \sum_{F\in \Delta} x^{|F|} 
               = \sum_{i=0}^{d} f_{i-1} (\Delta) x^i. \]
Here we follow the usual convention that $f_{-1}(\Delta) = 1$ 
whenever $\Delta$ is nonempty and $f_{-1}(\Delta) = 0$ otherwise.

Correspondingly, the \emph{$h$-vector} $(h_0(\Delta), 
h_1(\Delta),\dots,h_d(\Delta))$ and the \emph{$h$-polynomial} 
$h(\Delta, x)$ of $\Delta$ are defined by a linear transformation 
of the $f$-vector:

\begin{align} \label{eq:hdef}
    h(\Delta, x) & = \sum_{F \in \Delta} x^{|F|} (1-x)^{d-|F|} 
		\\ \nonumber & = \sum_{i=0}^d f_{i-1} (\Delta) \, x^i (1-x)^{d-i} = 
		\sum_{i=0}^d h_i(\Delta) x^i.
\end{align}
Equivalently,
\begin{equation}\label{eq:f-from-h}
    f(\Delta,x) = \sum_{i=0}^d h_i(\Delta)\, x^i\, (1+x)^{d-i}.
\end{equation}
In other words, to obtain $f(\Delta,x)$ from $h(\Delta,x)$, 
one should first reverse (with respect to $d$) the coefficients of 
$h(\Delta,x)$, then substitute $x+1$ for $x$ and finally reverse 
the order of the coefficients again.

\begin{remark}
Some sources (for instance, \cite{ADH23, Bj92}) define the 
$f$-polynomial and the $h$-polynomial with their coefficients in 
reverse order. Although there is no ambiguity in the meaning of 
$f_i(\Delta)$ and $h_i(\Delta)$, the way in which the coefficients 
appear does matter when considering some properties of the 
polynomials, such as ultra log-concavity.
\end{remark}

\begin{example} \label{ex:one-dim}
Let $\Delta$ be the one-dimensional simplicial complex consisting 
of $\{a, b\}$, $\{b, c\}$, $\{c, d\}$, $\{d, e\}$, $\{a, e\}$, 
$\{b, e\}$ and their subsets, where $a, b, c, d, e$ are pairwise 
distinct (see Figure~\ref{fig:example}). Then, $f(\Delta,x) = 1 + 5x + 6x^2$ and hence 
$h(\Delta,x) = (1-x)^2 + 5x(1-x) + 6x^2 = 1 + 3x + 2x^2$ and 
$h(\Delta) = (1, 3, 2)$. 
 \begin{figure}[ht]
    \centering
    \begin{tikzpicture}
        [scale=0.85,auto=center,
         every node/.style={circle, fill=black, inner sep=1.7pt},
         every edge/.append style = {thick}]
        \tikzstyle{edges} = [thick];

        \node[label=left:$a$] (p3) at (-1.902113, -0.618034) {};
        \node[label=$b$] (p2) at (-3.077684,  1.000000) {};
        \node[label=$c$] (p1) at (-1.902113,  2.618034) {};
        \node[label=$d$] (a2) at (0,2) {};
        \node[label=right:$e$] (a1) at (0,0) {};
        \draw[edges] (a2) -- (p1);
        \draw[edges] (a1) -- (a2);
        \draw[edges] (p1) -- (p2);
        \draw[edges] (p2) -- (p3);
        \draw[edges] (p3) -- (a1);        \draw[edges] (p2) -- (a1);
    \end{tikzpicture}
    \caption{The simplicial complex of Example~\ref{ex:one-dim}.}
    \label{fig:example}
\end{figure}
\end{example}

\subsection{Shellability}
\label{sec:simplicial}

Let $\Delta$ be a $(d-1)$-dimensional simplicial complex and
let $\fF$ be the set of facets (maximal with respect to inclusion 
faces) of $\Delta$. Assume that $\Delta$ is pure, meaning that 
every facet has exactly $d$ elements. Then, $\Delta$ is said to be 
\emph{shellable} if $\fF$ can be linearly ordered so that the 
following condition holds: Given $F, G \in \fF$ with $F$ preceding 
$G$ in this order, there exists $F' \in \fF$ which precedes $G$ 
such that $F \cap G \subseteq F' \cap G$ and $|F' \cap G| = d-1$. 
Such a linear ordering is called a \emph{shelling} of $\Delta$.
Shellability has strong consequences on the combinatorial and 
topological properties of $\Delta$. For instance, it implies that 
$\Delta$ has a nonnegative $h$-vector and that it is Cohen--Macaulay 
(over $\ZZ$ and any field); see \cite[Section~1]{Bj84}
\cite[Section~7.2]{Bj92} and \cite[Section~III.2]{Sta96} for more 
information about this important concept.

Let $V = \{v_1, v_2,\dots,v_d\}$ be a $d$-element set. Let $\sd_V$ 
denote the barycentric subdivision of the boundary complex of the 
simplex $2^V$. The faces of $\sd_V$ are all chains of nonempty 
proper subsets of $V$. Consider another $d$-element set 
$U = \{u_1,u_2,\ldots,u_d\}$ which is disjoint from $\sd_V$, and 
set $u_S := \{ u_i : i \in S \}$ for $S \subseteq [d]$ and 
$v_S := \{ v_i : i \in S \}$ for $S \subset [d]$.

Consider the simplicial complex $\Omega_V$ having vertex set 
$U \cup \{ v_S : S \subset [d] \}$ and faces given by the sets of 
the form 
\[ u_S \cup \{ v_{S_1}, v_{S_2},\dots,v_{S_k} \}, \]
where $S \subseteq S_1 \subset S_2 \subset \cdots \subset S_k 
\subset [d]$ and $k \ge 1$, or $S \subseteq [d]$ and $k = 0$. The 
complex $\Omega_V$ is combinatorially isomorphic to the boundary 
complex of a $d$-dimensional polytope known as the simplicial 
stellohedron \cite[Example~5]{BKR22} \cite[Section~10.4]{PRW08}.

More generally, given a pure $(d-2)$-dimensional subcomplex 
$\Gamma$ of $\sd_V$, we define $\Omega_V(\Gamma)$ as the 
subcomplex of $\Omega_V$ whose facets are the facets $F$ of 
$\Omega_V$ for which $F \sm (U \cup \{ v_\varnothing \})$ is a 
nonempty face of $\Gamma$. By definition, $\Omega_V(\Gamma)$ 
is a pure $(d-1)$-dimensional subcomplex of $\Omega_V \sm \{U\}$ 
and $\Omega_V(\Gamma) = \Omega_V \sm \{U\}$ if and only if 
$\Gamma = \partial(2^V)$. 

\begin{lemma} \label{lem:shellability-a}
For every $d$-element set $V$ and every shellable 
$(d-2)$-dimensional subcomplex $\Gamma$ of $\sd_V$, the 
simplicial complex $\Omega_V(\Gamma)$ is a proper shellable 
$(d-1)$-dimensional subcomplex of $\Omega_V$.
\end{lemma}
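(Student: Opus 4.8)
The plan is to take a shelling of $\Gamma$ and promote it to a shelling of $\Omega_V(\Gamma)$ by prepending an appropriate ``base'' and interleaving. First I would record the combinatorial structure of the facets of $\Omega_V(\Gamma)$. A facet of $\Omega_V$ has the form $u_S \cup \{v_{S_1}, \dots, v_{S_k}\}$ with $S \subseteq S_1 \subset \cdots \subset S_k \subset [d]$; purity of $\Omega_V$ forces such a facet to have exactly $d$ elements, which pins down $|S| + k = d$ together with the constraint that the chain, after deleting $v_\varnothing$ if present, is a \emph{maximal} chain of nonempty proper subsets of $[d]$ — i.e.\ a facet of $\sd_V$ — and $S$ is determined as the complement data forced by that maximal chain. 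Concretely, the facets of $\Omega_V(\Gamma)$ are indexed by pairs $(\sigma, j)$ where $\sigma\colon \varnothing \subsetneq T_1 \subsetneq \cdots \subsetneq T_{d-1} \subsetneq [d]$ is a facet of $\Gamma$ and $j$ is a threshold telling us where the $u$-part stops and the $v$-part begins; the facet is $u_{T_j} \cup \{v_{T_j}, v_{T_{j+1}}, \dots, v_{T_{d-1}}\}$ (with $T_0 = \varnothing$, so $j$ ranges over $0, 1, \dots, d-1$, the case $j=0$ giving the facet containing $v_\varnothing$). This identification is the bookkeeping heart of the argument and I would state it as a preliminary observation.

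Next I would define the shelling order. Fix a shelling $\sigma^{(1)}, \sigma^{(2)}, \dots, \sigma^{(m)}$ of $\Gamma$. Order the facets of $\Omega_V(\Gamma)$ lexicographically: first by the index $i$ of the facet $\sigma^{(i)}$ of $\Gamma$ they come from, and within a fixed $\sigma^{(i)}$ by \emph{decreasing} $j$ (so the all-$v$ facet, $j=0$, containing $v_\varnothing$ comes last within each block, and the facet with the largest $u$-part, $j=d-1$, comes first). The intuition is that, within the block of a single $\sigma^{(i)}$, sliding $j$ down by one replaces $v_{T_{j}}$... wait, more precisely replaces $u_{T_j}$'s extra vertices: the symmetric difference of consecutive facets in a block is controlled and of size $2$ (one $u$ leaves, one $v$ enters), so the ``new'' facet meets its predecessor in a codimension-one face. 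One then checks the shelling condition in two regimes. Within a block (same $\sigma^{(i)}$): given the facet for $(\sigma^{(i)}, j)$, its intersection with any earlier facet of the same block is contained in its intersection with the immediately preceding one $(\sigma^{(i)}, j+1)$, which has the required size $d-1$; this is a direct computation using the chain structure. Across blocks: given the facet $F$ for $(\sigma^{(i)}, j)$ with $i \ge 2$, and an earlier facet $G$ from a block $i' < i$, one uses the shelling property of $\Gamma$ to find $\sigma^{(i')}$... rather, a facet $\sigma^{(i'')}$ with $i'' < i$ whose intersection with $\sigma^{(i)}$ contains $\sigma^{(i')}\cap\sigma^{(i)}$ and is a codimension-one face of $\sigma^{(i)}$; translating through the indexing gives an earlier facet $F'$ of $\Omega_V(\Gamma)$ with $F \cap G \subseteq F' \cap G$ and $|F' \cap F| = d-1$, and one must additionally verify $|F'\cap G|$-type containments go through. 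A clean way to organize the cross-block case is: the facet of $\Omega_V(\Gamma)$ realizing the required codimension-one intersection can be taken to be $(\sigma^{(i'')}, j')$ for a suitable $j'$ depending on where the codimension-one facet of $\sigma^{(i)}$ differs from $\sigma^{(i)}$.

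The main obstacle, I expect, is the cross-block step: one has to make sure that the ``witness'' facet $F'$ required by the shelling definition — which the shellability of $\Gamma$ hands us at the level of the chains $\sigma$ — can be \emph{lifted} to $\Omega_V(\Gamma)$ with the correct threshold $j'$ so that simultaneously (i) $F'$ precedes $F$ in our order, (ii) $F \cap G \subseteq F' \cap G$, and (iii) $|F' \cap F| = d - 1$. The subtlety is that the face of $\Gamma$ where $\sigma^{(i'')}$ and $\sigma^{(i)}$ agree is a single maximal chain missing one step, say it omits a rank-$r$ flat of the chain; the right choice of $j'$ is then forced by $r$ relative to $j$, and one has to treat the cases $r \le j$ and $r > j$ separately to see that the $u$/$v$ split is preserved under passing to the codimension-one face. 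I would also need a short argument that $\Omega_V(\Gamma)$ is \emph{proper}, i.e.\ not all of $\Omega_V$: since $\Gamma$ is a proper (hence not top-dimensional in the relevant sense) subcomplex — here a $(d-2)$-dimensional subcomplex of $\sd_V$ that by hypothesis need not be all of $\partial(2^V)$ — the remark just before the lemma already records that $\Omega_V(\Gamma) = \Omega_V \smallsetminus \{U\}$ iff $\Gamma = \partial(2^V)$, so properness is immediate, and purity in dimension $d-1$ together with being a subcomplex of $\Omega_V$ was noted there as well; it remains only to observe that nothing in the shelling construction above required $\Gamma = \partial(2^V)$.
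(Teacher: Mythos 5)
Your facet bookkeeping and the within-block step are fine, but the proposed order is \emph{not} a shelling: the failure is in the cross-block case, and it is not merely a verification you left open --- the order itself is wrong. Take $d=3$, $V=\{v_1,v_2,v_3\}$, and let $\Gamma\subset\sd_V$ be the shellable path with facets $\sigma^{(1)}=\{v_{\{1\}},v_{\{1,2\}}\}$ and $\sigma^{(2)}=\{v_{\{1\}},v_{\{1,3\}}\}$. Your order (block of $\sigma^{(1)}$ first, decreasing $j$ within each block) lists the six facets of $\Omega_V(\Gamma)$ as $\{u_1,u_2,v_{\{1,2\}}\}$, $\{u_1,v_{\{1\}},v_{\{1,2\}}\}$, $\{v_\varnothing,v_{\{1\}},v_{\{1,2\}}\}$, then $D=\{u_1,u_3,v_{\{1,3\}}\}$, $\{u_1,v_{\{1\}},v_{\{1,3\}}\}$, $\{v_\varnothing,v_{\{1\}},v_{\{1,3\}}\}$. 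When $D$ --- the first facet of the second block, the one with the largest $u$-part --- is reached, its intersection with the union of the three earlier facets is the single vertex $u_1$, which is $0$-dimensional rather than pure of dimension $d-2=1$; concretely, for the pair $F=\{u_1,u_2,v_{\{1,2\}}\}$, $G=D$ there is no earlier $F'$ with $F\cap G\subseteq F'\cap G$ and $|F'\cap G|=2$. The root cause is that placing the large-$u$-part facets at the front of each block strands them: their only codimension-one neighbors lie in the same block or later ones.

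The repair, which is what the paper does, is to order the facets primarily by the cardinality of the $u$-part, \emph{increasing}, and only secondarily (within each fixed $u$-part $u_S$) by a shelling of the corresponding link in $\Gamma$. With that order, for a current facet $G$ with $G\cap U=u_T$ and an earlier facet $F$ with $F\cap U=u_S$, $|S|\le |T|$, $S\ne T$, the witness is produced by an explicit exchange: pick $i\in T\sm S$, delete $u_i$ from $G$ and insert $v_{T\sm\{i\}}$; the result has a strictly smaller $u$-part and is therefore guaranteed to appear earlier. No analogous exchange is available in your scheme, because your blocks are keyed to facets of $\Gamma$ rather than to $u$-parts. Two smaller issues: your parametrization by pairs $(\sigma,j)$ is many-to-one (distinct facets of $\Gamma$ with the same upper segment give the same facet of $\Omega_V(\Gamma)$), so ``the block a facet comes from'' needs to be pinned down, e.g.\ as the minimal index; and the witness condition to verify is $F\cap G\subseteq F'\cap G$ with $|F'\cap G|=d-1$, not $|F'\cap F|=d-1$ as written in places. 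The properness claim is fine, since $U\notin\Omega_V(\Gamma)$ by definition.
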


\begin{proof}
Let $\fF$ be the set of facets of $\Omega_V(\Gamma)$ and for 
$S \subset [d]$, let $\fF_S$ be the set of those facets 
$F \in \fF$ for which $F \cap U = u_S$. 

Given $S \subset [d]$, we consider a maximal chain in the 
poset of nonempty subsets of $S$, ordered by inclusion, and 
let $\cC_S \in \sd_V$ be the corresponding chain of proper 
subsets of $V$. Then, $\fF_S$ is in one-to-one correspondence 
with the set of facets of the link $\link_\Gamma(\cC_S)$, which 
is a shellable subcomplex of $\Gamma$ of codimension equal to 
$|S|$. For example, $\fF_\varnothing$ is in one-to-one 
correspondence with the set of facets of $\Gamma$ and 
$\fF_{ \{u_i\} }$ is in one-to-one correspondence with the 
set of facets of $\link_\Gamma(v_i)$ for every $i \in [d]$.

Let us linearly order the facets of $\Omega_V(\Gamma)$ 
subject to the following conditions:
\begin{itemize}
\item[(a)] 
For all $S, T \subset [d]$, the elements of $\fF_S$ appear 
before those of $\fF_T$ whenever $|S| < |T|$.

\item[(b)] 
For every $S \subset [d]$, the elements of $\fF_S$ appear 
relative to each other according to a shelling order of 
$\link_\Gamma(\cC_S)$.
\end{itemize}

We now show that such a linear order of $\fF$ is a shelling 
of $\Omega_V(\Gamma)$. Given $F, G \in \fF$ with $F$ preceding 
$G$, we need to verify that there exists $F' \in \fF$ which 
precedes $G$ such that $F \cap G \subseteq F' \cap G$ and 
$|F' \cap G| = d-1$. Indeed, this follows from condition (b) 
if $F, G \in \fF_S$ for some $S \subset [d]$. Otherwise, we 
have $F \in \fF_S$ and $G \in \fF_T$ for some $S, T \subset 
[d]$ with $|S| \le |T|$ and $S \ne T$. Then, there exists 
$i \in (T \sm S)$ and the set obtained from $G$ by removing 
$u_i$ and including $v_{T \sm \{i\}}$ (which is equal to 
$v_\varnothing$, if $T = \{i\}$) can serve as the desired 
facet $F'$. 
\end{proof}

\begin{example} 
(a) For $d=2$, $\Omega_V$ is the one-dimensional simplicial 
complex having facets $\{v_\varnothing, v_1\}$, 
$\{v_\varnothing, v_2\}$, $\{u_1, v_1\}$, $\{u_2, v_2\}$ 
and $\{u_1, u_2\}$, where we have written $v_i$ instead of 
$v_{\{i\}}$ for $i \in \{1, 2\}$.

(b) Adopting similar notation for $v_S$, let $d=3$ and 
$\Gamma$ be the subcomplex of $\sd_V$ with facets $\{v_1, 
v_{12}\}$ and $\{v_2, v_{12}\}$. Then, $\Omega_V(\Gamma)$
has facets $\{v_\varnothing, v_1, v_{12}\}$, 
$\{v_\varnothing, v_2, v_{12}\}$, $\{u_1, v_1, v_{12}\}$, 
$\{u_2, v_2, v_{12}\}$ and $\{u_1, u_2, v_{12}\}$. Any 
linear ordering of these facets which lists $\{v_\varnothing, 
v_1, v_{12}\}$ and $\{v_\varnothing, v_2, v_{12}\}$ first, 
in any relative order, followed by $\{u_1, v_1, v_{12}\}$ 
and $\{u_2, v_2, v_{12}\}$, in any relative order, followed
by $\{u_1, u_2, v_{12}\}$ is a shelling of 
$\Omega_V(\Gamma)$ which satisfies the conditions in the 
proof of Lemma~\ref{lem:shellability-a}.
\end{example}

\subsection{Convex ear decompositions} 
\label{sec:ced}

Let $\Delta$ be a $(d-1)$-dimensional simplicial complex. 
A \emph{convex ear decomposition} of $\Delta$ is a sequence 
$(\Delta_1, \Delta_2,\dots,\Delta_m)$ of subcomplexes of 
$\Delta$ such that:
\begin{itemize}
\item[{\rm (i)}] 
$\Delta = \bigcup_{i=1}^m \Delta_i$,
    
\item[{\rm (ii)}] 
$\Delta_1$ is isomorphic to the boundary complex of a 
$d$-dimensional simplicial polytope and, for $2 \le k \le m$, 
$\Delta_k$ is a $(d-1)$-dimensional simplicial ball which is
isomorphic to a proper subcomplex of the boundary complex of 
a $d$-dimensional simplicial polytope; and
	
\item[{\rm (iii)}]
$\Delta_k \cap \left( \bigcup_{i=1}^{k-1} \Delta_i \right) = 
 \partial \Delta_k$ for $2 \le k \le m$.
\end{itemize}

Convex ear decompositions were introduced by Chari \cite{Ch97} 
as a tool to prove the top-heavy inequalities for $h$-vectors 
of simplicial complexes \cite[Corollary~2]{Ch97}. Swartz 
\cite[Theorem~4.1]{Sw06} showed that every simplicial complex 
$\Delta$ which admits a convex ear decomposition is doubly 
Cohen--Macaulay, meaning that $\Delta$ is Cohen--Macaulay and 
that every subcomplex which is obtained by removing a vertex 
from $\Delta$ is Cohen--Macaulay, of the same dimension as 
$\Delta$. We refer to \cite[Section~III.3]{Sta96} for more 
information about this important class of complexes. 

\begin{example} 
Let $\Delta$ be as in Example~\ref{ex:one-dim}. There
is the convex ear decomposition $(\Delta_1, \Delta_2)$ of 
$\Delta$, where $\Delta_1$ is the one-dimensional sphere with 
facets $\{b, c\}$, $\{c, d\}$, $\{d, e\}$ and $\{b, e\}$ and 
$\Delta_2$ is the one-dimensional ball with facets 
$\{a, b\}$ and $\{a, e\}$. Alternatively, one can choose 
$\Delta_1$ to have all facets of $\Delta$ other than 
$\{b, e\}$ and $\Delta_2$ to have the single facet $\{b, e\}$.   
\end{example}

\begin{remark} \label{rem:ced}
Let $\Sigma_1, \Sigma_2,\dots,\Sigma_m$ be subcomplexes of a 
$(d-1)$-dimensional simplicial complex $\Delta$ such that:
\begin{itemize}
\item[{\rm (i)}] 
$\Delta = \bigcup_{i=1}^m \Sigma_i$,
    
\item[{\rm (ii)}] 
$\Sigma_i$ is isomorphic to the boundary complex of a 
$d$-dimensional simplicial polytope for every $i \in [m]$; 
and
	
\item[{\rm (iii)}]
$\Sigma_k \cap \left( \bigcup_{i=1}^{k-1} \Sigma_i \right)$ 
is a $(d-1)$-dimensional simplicial ball, for all $2 \le k 
\le m$.
\end{itemize}
Then, $\Delta$ admits a convex ear decomposition in which 
the role of $\Delta_1$ is played by $\Sigma_1$ and the role 
of $\Delta_k$ for $2 \le k \le m$ is played by the complex 
obtained from $\Sigma_k$ by removing the interior faces of 
the ball $\Sigma_k \cap \left( \bigcup_{i=1}^{k-1} \Sigma_i 
\right)$. 
\end{remark}

\subsection{Matroids}
\label{sec:matroid}

Let $\M$ be a matroid on the ground set $E$, having lattice of 
flats $\mathcal{L}(\M)$. The \emph{Bergman complex} of $\M$, 
denoted by $\uDelta_{\M}$, is defined as the order complex of 
the poset $\mathcal{L}(\M) \smallsetminus 
\{\widehat{0}, \widehat{1}\}$, meaning the simplicial complex 
consisting of all chains in that poset. Note that $\uDelta_{\M} 
= \{\varnothing\}$ if $\M$ has rank zero or one.

The following proposition allows one to compute the $f$-polynomial 
of the augmented Bergman complex in terms of the $f$-polynomials of 
Bergman complexes of contractions of $\M$.

\begin{proposition} \label{prop:fOmega}
Let $\M$ be a matroid with set of bases $\mathscr{B}(\M)$. Then, the 
$f$-polynomial of the augmented Bergman complex of $\M$ is given by 
the formula
\[ f(\Delta_{\M},x) = |\mathscr{B}(\M)|\, x^d + (x+1) \, 
\sum_{\substack{S\in \I_{\M} \\ S \notin \mathscr{B}(\M)}} x^{|S|} \, 
f(\uDelta_{\M/S},x), \]
where $d$ is the rank of $\M$ and $\M/S$ is the contraction of $\M$ 
by $S$.
\end{proposition}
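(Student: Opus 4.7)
My plan is to partition the faces of $\Delta_{\M}$ according to their \emph{independent-set part} $S = \{ i \in E : u_i \in F \}$ and then evaluate $\sum_{F \in \Delta_{\M}} x^{|F|}$ on each block, splitting into two cases depending on whether $S$ lies in $\mathscr{B}(\M)$ or not.

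If $S$ is a basis, then $\overline{S} = E$, so no proper flat of $\M$ contains $S$, and the only face of $\Delta_{\M}$ with independent-set part $S$ is $\{u_i : i \in S\}$, of size $d$; these faces together contribute the term $|\mathscr{B}(\M)|\, x^d$. Next I would turn to $S \in \I_{\M}$ that is not a basis, where $\overline{S}$ is a proper flat. The faces with independent-set part $S$ are $\{u_i : i \in S\}$ together with the sets $\{u_i : i \in S\} \cup \{v_{F_1}, \ldots, v_{F_k}\}$ for $k \geq 1$ and $S \subseteq F_1 \subsetneq \cdots \subsetneq F_k \subsetneq E$. Since each $F_j$ is a flat, the condition $S \subseteq F_1$ is equivalent to $\overline{S} \subseteq F_1$, and the standard isomorphism between the interval $[\overline{S}, E]$ of $\lL(\M)$ and $\lL(\M/S)$ identifies such chains with chains in $\lL(\M/S)$ starting at or above $\hat{0}$ and stopping strictly below $\hat{1}$. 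Splitting according to whether $F_1 = \overline{S}$ or $F_1 \supsetneq \overline{S}$, the number of admissible chains of length $k \geq 1$ equals $c_{k-1} + c_k$, where $f(\uDelta_{\M/S}, x) = \sum_{j \geq 0} c_j x^j$ and $c_0 = 1$ (the empty chain).

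The total contribution from this $S$ then becomes
\[ x^{|S|} + \sum_{k \geq 1} (c_{k-1} + c_k)\, x^{|S|+k} \,=\, x^{|S|+1}\, f(\uDelta_{\M/S}, x) + x^{|S|}\, f(\uDelta_{\M/S}, x) \,=\, (x+1)\, x^{|S|}\, f(\uDelta_{\M/S}, x), \]
since the lone $x^{|S|}$ from the $k = 0$ face is precisely the summand needed to extend the series $\sum_{k \geq 1} c_k\, x^{|S|+k}$ down to $k=0$. Summing over all non-basis $S \in \I_{\M}$ and adding the basis contribution yields the proposition. There is no serious obstacle; the only matroid-theoretic input is the identification $[\overline{S}, E] \cong \lL(\M/S)$ together with the observation that the closure of a non-basis independent set is a proper flat, and the rest is bookkeeping.
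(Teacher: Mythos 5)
Your proposal is correct and follows essentially the same route as the paper: partition the faces by the independent-set part $S$, observe that a basis contributes only $x^d$, and recognize that for non-basis $S$ the admissible chains are exactly the faces of a cone over $\uDelta_{\M/S}$ (your explicit $c_{k-1}+c_k$ bookkeeping is just the cone identity $f(\mathrm{cone},x)=(1+x)f(\Delta,x)$ written out). No gaps.
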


\begin{proof}
By definition, a face of $\Delta_{\M}$ has the form $\{u_i: i \in S\} 
\cup \{v_{F_1},\ldots, v_{F_k}\}$, where $S$ is an independent set and 
$F_1 \subset \cdots \subset F_k$ is a (possibly empty) chain of proper 
flats containing $S$. After fixing the independent set $S$, we have to 
enumerate chains of flats in the contraction $\M/S$, but allowing the 
first element in the chain to be equal to the bottom element while 
disallowing the last element of the chain to be equal to the top element.
In other words, we are enumerating the faces of a cone over the Bergman 
complex $\uDelta_{\M/S}$. There is one special case in which the 
operation is not an actual coning: when $\widehat{0}=\widehat{1}$ in 
the poset $\mathcal{L}(\M/S)$, which happens if and only if $\M/S$ has 
rank zero or, equivalently, when $S$ is a basis of $\M$. Summing over 
all independent sets yields the formula of the statement. 
\end{proof}

\begin{example} \label{ex:uniform} \rm
Consider the uniform matroid $\U_{d,n}$ of rank $d$ on $n$ elements. 
The preceding formula yields
\[ f(\Delta_{\U_{d,n}},x) = \binom{n}{d} \, x^d + (x+1) \, 
   \sum_{j=0}^{d-1} \binom{n}{j}x^{j} \, f(\uDelta_{\U_{d-j,n-j}},x). \]
Notice that the expression on the right depends on the $f$-polynomials 
of Bergman complexes of uniform matroids. These can be computed, for 
example, using \cite[Proposition~8.10]{FS24}. In Section~\ref{sec:chow} 
we will establish a surprising connection between $f(\Delta_{\U_{d,n}},x)$ 
and the Hilbert--Poincar\'e series of augmented Chow rings of uniform 
matroids. 
\qed
\end{example}

The next result provides an alternative concrete formula for computing 
the $h$-polynomial of the augmented Bergman complex of a matroid $\M$. 
Specifically, it is given by a convolution of $h$-polynomials of 
independence complexes of restrictions of $\M$ and Bergman complexes 
of contractions of $\M$ by flats (as opposed to independent sets).

\begin{proposition} \label{prop:hOmega}
The $h$-polynomial of the augmented Bergman complex of a matroid $\M$ 
on the ground set $E$ is given by the formula
\begin{equation} \label{eq:hOmega}
h(\Delta_{\M}, x) = h(\I_{\M},x) + x
  \sum_{F \in \lL(\M) \sm \{E\}} h(\I_{\M|_F},x) 
	                              h(\uDelta_{\M/F},x)
\end{equation}
where, for $F \in \lL(\M)$, the matroids $\M|_F$ and $\M/F$ are the 
restriction of $\M$ to $F$ and the contraction of $\M$ by $F$, 
respectively.
\end{proposition}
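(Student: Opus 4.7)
The plan is to establish the identity first at the level of $f$-polynomials and then convert to $h$-polynomials by the standard substitution $x \mapsto x/(1-x)$. The corresponding $f$-polynomial identity reads
\[
f(\Delta_{\M}, x) \;=\; f(\I_{\M}, x) + x \sum_{F \in \lL(\M) \sm \{E\}} f(\I_{\M|_F}, x) \, f(\uDelta_{\M/F}, x).
\]
To prove it I would partition the faces of $\Delta_{\M}$ differently than in Proposition~\ref{prop:fOmega}: separate the faces with $k=0$, which are exactly the faces of $\I_{\M}$ and contribute $f(\I_{\M},x)$, from those with $k \ge 1$, which I would group by their minimum flat $F := F_1$. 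For fixed $F$, the independent set $S \subseteq F$ ranges over $\I_{\M|_F}$, the extra vertex $v_F$ contributes a factor of $x$, and the tail $(F_2,\ldots,F_k)$ is a (possibly empty) chain of proper flats of $\M$ strictly containing $F$, which via the canonical isomorphism between $\lL(\M/F)$ and the interval $[F,E]$ of $\lL(\M)$ corresponds to a face of $\uDelta_{\M/F}$.

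To pass from $f$ to $h$ I would use the identity $h(\Delta,x) = (1-x)^{d'} f(\Delta, x/(1-x))$, where $d'$ is one more than the dimension of $\Delta$. The complexes involved have matching dimensions: $\I_{\M|_F}$ has dimension $\rk(F)-1$ and $\uDelta_{\M/F}$ has (formal) dimension $d - \rk(F) - 2$, so the product of the two leading factors is $(1-x)^{\rk(F)}(1-x)^{d-\rk(F)-1} = (1-x)^{d-1}$. Combined with the extra $x$ on the right-hand side of the $f$-polynomial identity, which under the substitution becomes $x/(1-x)$, the prefactors assemble into $(1-x)^d$, exactly matching the prefactor needed to recover $h(\Delta_{\M},x)$ on the left-hand side. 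A short bookkeeping calculation then yields \eqref{eq:hOmega}.

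The main delicate point, and the one I would be careful about, is handling the degenerate complexes that appear at the boundary of the flat lattice. When $F$ is a hyperplane, $\M/F$ has rank one, so $\uDelta_{\M/F} = \{\varnothing\}$ has formal dimension $-1$ and $h$-polynomial equal to $1$; and for $F = \widehat{0}$ (the bottom flat, possibly nonempty if $\M$ has loops), the restriction $\I_{\M|_{\widehat{0}}}$ is similarly reduced to $\{\varnothing\}$. These edge cases enter the sum, so one must verify that the convention $h = 1$ for the void complex, together with the formal dimension assignments $\rk(F)$ and $d - \rk(F) - 1$ used in the exponents of $(1-x)$, is consistent throughout; once this is checked, the algebraic passage from $f$ to $h$ is a routine manipulation.
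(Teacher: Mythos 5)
Your proposal is correct and follows essentially the same route as the paper: both proofs partition the faces of $\Delta_{\M}$ into those with $k=0$ (giving $\I_{\M}$) and those grouped by the minimal flat $F_1=F$, and both rely on the same dimension bookkeeping $\dim \I_{\M|_F} = \rk(F)-1$ and $\dim \uDelta_{\M/F} = d-\rk(F)-2$. The only difference is presentational: the paper factors the sum $\sum_G x^{|G|}(1-x)^{d-|G|}$ directly over each block of the partition, while you first record the $f$-polynomial identity and then substitute $x \mapsto x/(1-x)$ --- the same computation in a different order, and your treatment of the degenerate cases ($F$ a hyperplane, $F$ the set of loops) is consistent with the paper's conventions.
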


\begin{proof}
Given a flat $F \in \lL(\M) \sm \{E\}$, let us denote by $\gG_F$ the 
set of faces (\ref{eq:Omega-faces}) of $\Delta_{\M}$ for which 
$k \ge 1$ and $F_1 = F$. Then, $\Delta_{\M}$ is the disjoint union 
of $\I_{\M}$ and the sets $\gG_F$. Therefore, by 
equation~(\ref{eq:hdef}),

\begin{align*} 
  h(\Delta_{\M}, x) & = \sum_{G \in \I_{\M}} x^{|G|} 
  (1-x)^{d-|G|} + \sum_{F \in \lL(\M) \sm \{E\}} 
	\sum_{G \in \gG_F} x^{|G|} (1-x)^{d-|G|} 
	\\ & = h(\I_{\M},x) + \sum_{F \in \lL(\M) \sm \{E\}} 
	\sum_{G \in \gG_F} x^{|G|} (1-x)^{d-|G|}. 
\end{align*}
Since the sets $\{u_i : i \in S \}$ with $S \subseteq F$ are exactly 
the faces of $\I_{\M|_F}$ and the chains of flats $F_2 \subset \cdots 
\subset F_k \subset E$ with $F \subset F_2$ are exactly the faces of 
$\uDelta_{\M/F}$, we conclude that 

\begin{align*} 
\sum_{G \in \gG_F} x^{|G|} (1-x)^{d-|G|} & = 
x \sum_{G' \in \I_{\M|_F}} x^{|G'|} (1-x)^{\rk(F)-|G'|} 
\sum_{G'' \in \uDelta_{\M/F}} x^{|G''|} 
            (1-x)^{d-1-\rk(F)-|G''|}
\\ & = x \sum_{F \in \lL(\M) \sm \{E\}} 
	       h(\I_{\M|_F},x) h(\uDelta_{\M/F},x)
\end{align*}
for every $F \in \lL(\M) \sm \{E\}$ and the proof follows.
\end{proof}

\begin{example} \label{ex:uniform-h} \rm
For the uniform matroid $\U_{d,n}$ of rank $d$ on $n$ elements, 
equation~(\ref{eq:hOmega}) yields that
\begin{align*} 
h(\Delta_{\U_{d,n}}, x) & = \sum_{k=0}^d \binom{n-d+k-1}{d} x^k 
 + x \sum_{k=0}^{d-1} \binom{n}{k} h(\uDelta_{\U_{d-k,n-k}},x) \\ 
	                      & = \sum_{k=0}^d \binom{n-d+k-1}{d} x^k 
 + x \sum_{k=0}^{d-1} \binom{n}{k} \sum_{w \in \fS_{n-k}: \, 
                      \Des(w) \subseteq [d-k-1]} x^{\des(w)},
\end{align*}
where $\Des(w)$ and $\des(w)$ stand for the set and number of 
descents of a permutation $w$, respectively; see 
\cite[Example~2.2]{ADK24} for the justification of the last 
equality. This formula agrees with 
the fact that $h(\Delta_{\U_{n,n}}, x)$ is equal to the $n$th 
binomial Eulerian polynomial \cite[Example~5]{BKR22} 
\cite[Section~10.4]{PRW08}. 
\end{example}

\section{Proof of Theorem~\ref{thm:main}}
\label{sec:proof}

This section proves Theorem~\ref{thm:main}. Let $\M$ be a 
matroid of rank $d$ on the ground set $E = [n]$, as in the 
introduction. 

Let us briefly recapitulate some notions that are needed for the 
proof of Theorem~\ref{thm:main}, following \cite[Section~7.3]{Bj92} 
and \cite[Section~4]{NS04}.

Fix the total order $1 < 2 < \cdots < n$ of the ground set $E=[n]$ 
of $\M$. Let us agree to write every $d$-subset 
$S=\{i_1,\ldots,i_d\}\subseteq E$ as an ordered $d$-tuple 
$[i_1,\ldots,i_d]$, where $i_1 < \cdots < i_d$. This induces the 
lexicographic order on the set of all $d$-subsets of $E$: we say 
that $[i_1,\ldots,i_d]$ \emph{lexicographically precedes} 
$[j_1,\ldots,j_d]$ if there is an index $\ell$ such that 
$i_{\ell}<j_{\ell}$ and $i_m=j_m$ for all $m<\ell$. A 
\emph{broken circuit} of $\M$ is a circuit of $\M$ with its 
minimum element removed. An \emph{nbc basis} of $\M$ is a 
basis of $\M$ that does not contain a broken circuit; observe 
that all such bases must contain the element $1$. 

Consider a facet $\mathscr{F}$ of the order complex of 
$\mathcal{L}(\M)$ or, in other words, a maximal chain of flats 
of $\M$. All such chains are of the form $\mathscr{F} = 
\{\widehat{0} = F_0 \subset F_1 \subset\cdots \subset F_{d-1} 
\subset F_d = \widehat{1}\}$ and one may consider the ordered 
$d$-tuple $(a_1,\ldots,a_d)$, where $a_i = \min\{ e \in E : 
\overline{F_{i-1}\cup \{e\}} = F_{i}\}$ for each $i\in [d]$. 
This $d$-tuple is called the \emph{minimal labeling} associated 
to $\mathscr{F}$. It is evident that $\{a_1,\ldots,a_d\}$ is a 
basis of $\M$. 
For $S \subseteq E$, we denote by $\overline{S}$ 
the flat of $\M$ spanned by $S$. 

Given a basis $B$ of $\M$, we 
denote by $\sd_B$ the simplicial complex with faces 
\[ \{ v_{\overline{S_1}}, 
      v_{\overline{S_2}},\dots,v_{\overline{S_k}} 
			\}, \]
where $\varnothing \subset S_1 \subset S_2 \subset \cdots 
\subset S_k \subset B$. Clearly, $\sd_B$ is a subcomplex of 
$\uDelta_{\M}$ which is combinatorially isomorphic to $\sd_V$ 
for any $d$-element set $V$. Let $B$ be a basis of $\M$. 
Every facet of $\sd_B$ or, equivalently, every 
chain $\varnothing = S_0 \subset S_1 \subset \cdots \subset 
S_{d-1} \subset S_d = B$, corresponds to a permutation 
$(a_1,\ldots,a_d)$ of $B$, where $S_i = S_{i-1}\cup \{a_i\}$ 
for $i\in [d]$.

The following result is obtained by relying on the techniques 
of Nyman and Swartz \cite[Section~4]{NS04}. 

\begin{proposition} \label{prop:NS04}
    Let $B_1, B_2,\dots,B_m$ be the bases of $\M$, 
    sorted in the lexicographic order. Consider  
    the subcomplex 
    \begin{equation} \label{eq:intersect-sd}
      \Gamma_k = \sd_{B_k} \cap 
    	\left( \, \bigcup_{i<k} \sd_{B_i} \right)
    \end{equation}
    of $\sd_{B_k}$ for $2 \le k \le m$. 
    \begin{itemize}
    \item[{\rm (a)}] 
    The complex $\Gamma_k$ is a shellable 
    $(d-2)$-dimensional ball, if $B_k$ is an nbc basis. 
    
    \item[{\rm (b)}] 
    Otherwise, $\Gamma_k = \sd_{B_k}$.
    \end{itemize}
\end{proposition}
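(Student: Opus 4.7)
My plan is to follow the strategy of Nyman--Swartz \cite[Section~4]{NS04}, using the minimal-labeling bijection. Each facet of $\sd_{B_k}$ is encoded by an ordering $(b_{\sigma(1)}, \ldots, b_{\sigma(d)})$ of $B_k$, which in turn determines a maximal chain of flats $F_0 \subset F_1 \subset \cdots \subset F_d$ via $F_i := \overline{\{b_{\sigma(1)}, \ldots, b_{\sigma(i)}\}}$. The same chain, now regarded as a facet of $\sd_{B'}$, comes from some other basis $B'$ precisely when $|B' \cap (F_i \sm F_{i-1})| = 1$ for every $i$, and a direct sorting argument shows that the lex-minimum basis among all such $B'$ is the minimal labeling $\{a_1,\ldots,a_d\}$, where $a_i = \min(F_i \sm F_{i-1})$. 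Hence a facet of $\sd_{B_k}$ lies in $\Gamma_k$ iff the minimal labeling of its chain is lexicographically strictly less than $B_k$; equivalently, a facet is \emph{not} in $\Gamma_k$ iff the minimal labeling of its chain equals $B_k$ as a set.

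I would next invoke the classical theory of broken circuits: a basis $B$ arises as the minimal labeling of some maximal chain of flats iff $B$ is nbc, and when it does, the canonical witness is the chain $F_i = \overline{\{b_1,\ldots,b_i\}}$ built from the increasing listing $b_1 < \cdots < b_d$ of $B$. Part~(b) is then immediate: if $B_k$ is not nbc then no facet of $\sd_{B_k}$ has minimal labeling $B_k$, so $\Gamma_k = \sd_{B_k}$.

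For part~(a), assume $B_k$ is nbc. The facets of $\sd_{B_k}$ outside $\Gamma_k$ form the nonempty set of \emph{admissible} orderings of $B_k$, namely those with $b_{\sigma(i)} = \min(F_i \sm F_{i-1})$ for every $i$. Since $\sd_{B_k}$ is combinatorially the barycentric subdivision of the boundary of a $(d-1)$-simplex, it is a $(d-2)$-sphere. To upgrade the above into the assertion that $\Gamma_k$ is a shellable $(d-2)$-ball, I would exhibit a shelling of $\sd_{B_k}$ in which facets are linearly ordered by the lex order of their minimal labels, with a secondary rule within each label-class drawn from a recursive EL-shelling, so that every facet of $\Gamma_k$ precedes every admissible facet. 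Then the standard fact that a proper initial segment of a shelling of a simplicial sphere is a shellable ball of the same dimension finishes the argument.

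The main obstacle is the verification of the shelling property for this prescribed order --- in particular, checking that every required bridging facet $F'$ (coming in at codimension one in $F \cap G$) can be located inside $\sd_{B_k}$ itself rather than merely inside the ambient order complex of $\lL(\M)$. This is precisely the delicate step adapted from \cite[Section~4]{NS04}, where Nyman and Swartz worked with the full Bergman complex rather than with an individual $\sd_{B_k}$, and it is where most of the technical work would lie.
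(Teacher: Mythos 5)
Your reduction to minimal labelings is sound and matches the paper's: a facet of $\sd_{B_k}$ corresponds to a maximal chain of flats, the minimal labeling of that chain is the componentwise (hence lexicographically) smallest basis whose $\sd$-complex contains that facet, and therefore a facet of $\sd_{B_k}$ lies outside $\Gamma_k$ exactly when its minimal labeling coincides with the permutation of $B_k$ defining it. Part~(b) then follows as you say, since a non-nbc basis is never a minimal labeling; the paper argues this more directly, by locating a broken circuit in an initial segment of the permutation and swapping in the minimum of the corresponding circuit, but the mechanism is the same.

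The gap is in part~(a), and you have named it yourself: everything hinges on the claim that ordering the facets of $\sd_{B_k}$ by the lexicographic order of their minimal labels (refined within each label class) is a shelling of the sphere $\sd_{B_k}$, and you do not verify it. This is not a routine check: the Nyman--Swartz shelling lives on the full order complex of $\lL(\M)$, and the bridging facet it supplies for a pair of facets of $\sd_{B_k}$ need not lie in $\sd_{B_k}$. The paper closes this gap by a route that avoids constructing any shelling of the whole sphere. It introduces $\Lambda_k\subseteq\sd_{B_k}$, the subcomplex of facets whose minimal label equals the corresponding permutation of $B_k$, and imports from \cite[Propositions~4.4 and~4.5]{NS04} that $\Lambda_k$ is a shellable $(d-2)$-ball with $\Lambda_k\cap\bigl(\bigcup_{i<k}\Lambda_i\bigr)=\partial\Lambda_k$; combined with the identity $\bigcup_{i\le k}\sd_{B_i}=\bigcup_{i\le k}\Lambda_i$ (proved by the swapping argument), this exhibits $\Gamma_k$ as the complementary ball to $\Lambda_k$ inside the sphere $\sd_{B_k}$, giving ballness with no shelling of the sphere at all. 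Shellability of $\Gamma_k$ is then obtained not from an initial-segment argument but from the observation (\cite[Lemma~4.3]{NS04}) that the facets of $\Gamma_k$ correspond to an order ideal in the weak order on the permutations of $B_k$, so that Bj\"orner's theorem on Coxeter complexes \cite[Theorem~2.1]{Bj84} applies, $\sd_{B_k}$ being the Coxeter complex of the symmetric group on $B_k$. To complete your own plan you would have to actually prove that your proposed order is a shelling; otherwise, the weak-order-ideal criterion is the tool that makes the argument go through.
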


\begin{proof}
    Suppose first that $B_k$ is \emph{not} nbc. Consider 
    any facet $\mathscr{F}$ of $\sd_{B_k}$, say with corresponding 
    permutation $(a_1, a_2,\dots,a_d)$ of $B_k$, and let
    $j \in [d]$ be the smallest index for which $\{a_1, 
    a_2,\dots,a_j\}$ does contain a broken circuit. Replacing 
    $a_j$ with the smallest element of that circuit 
    yields a permutation of a basis $B_i$ with $i<k$ 
    which also represents $\mathscr{F}$. This argument shows that 
    $\mathscr{F}$ is a facet of $\sd_{B_i}$ and verifies part (b).
    
    For $k \in [m]$, let $\Lambda_k$ be the subcomplex 
    of $\sd_{B_k}$ whose facets are the maximal chains 
    \[ \{ v_{\overline{S_1}}, 
          v_{\overline{S_2}},\dots,v_{\overline{S_{d-1}}} 
    			\} \]
    of $\lL(\M)$, where $\varnothing \subset S_1 \subset 
    S_2 \subset \cdots \subset S_{d-1} \subset B_k$, 
    whose minimal label coincides with the corresponding
    permutation of $B_k$. We claim that 
    \begin{equation} \label{eq:unions}
      \bigcup_{i=1}^k \sd_{B_i} = \bigcup_{i=1}^k 
    	\Lambda_i
    \end{equation}
    for every $k \in [m]$. Indeed, the $\supseteq$ 
    inclusion is trivial. For its reverse, let $\mathscr{F}$ be 
    a facet of $\sd_{B_k}$ which does not belong to 
    $\Lambda_k$ and let $(a_1, a_2,\dots,a_d)$ be the 
    corresponding permutation of $B_k$. Since this 
    permutation is not equal to the minimal labeling 
    of the maximal chain of $\lL(\M)$ corresponding to
    $\mathscr{F}$, there exists an index $j \in [d]$ and an 
    element $a_0 \in E$ with $a_0 < a_j$ such that 
    \[ \overline{ \{a_1,\dots,a_{j-1}, a_j\} } = 
		   \overline{ \{a_1,\dots,a_{j-1}, a_0\}}. \]
    %
		Replacing $a_j$ with $a_0$ yields a permutation 
    of a basis $B_i$ with $i<k$ which also represents 
    $\mathscr{F}$. This shows that $k \ge 2$ and that 
		$\mathscr{F}$ is a facet of $\bigcup_{i=1}^{k-1} 
		\sd_{B_i}$. Hence, the claim follows by induction on $k$. 
     
    Suppose now that $2 \le k \le m$ and that $B_k$ is 
    nbc. It was shown in \cite[Section~4]{NS04} (see 
    Propositions~4.4 and~4.5 there) that $\Lambda_k$ 
    is a $(d-2)$-dimensional shellable ball and that 
    \begin{equation} \label{eq:boundary}
      \Lambda_k \cap \left( \, \bigcup_{i<k} 
      \Lambda_i \right) = \partial \Lambda_k.
    \end{equation}
    Combining equations~(\ref{eq:intersect-sd}), 
    (\ref{eq:unions}) and~(\ref{eq:boundary}) shows 
    that $\Gamma_k$ is disjoint from the interior of 
    the ball $\Lambda_k$. On the other hand, every 
    facet of $\sd_{B_k}$ which does not 
    belong to $\Lambda_k$ must belong to the union 
    $\bigcup_{i<k} \Lambda_i = \bigcup_{i<k} \sd_{B_i}$
    and hence to $\Gamma_k$. It follows that 
    $\Gamma_k$ is the complementary ball to 
    $\Lambda_k$ within the sphere $\sd_{B_k}$.
    Moreover, \cite[Lemma~4.3]{NS04} implies that 
    the facets of $\Gamma_k$ correspond to a set of 
    permutations of $B_k$ which forms an order ideal 
    in the weak order on the group of permutations 
    of $B_k$ and hence $\Gamma_k$ is also shellable 
    by \cite[Theorem~2.1]{Bj84}. 
\end{proof}

Given a basis $B$ of $\M$, we denote by $\Omega_B$ 
the simplicial complex with faces
\[ \{u_i : i \in S\} \cup \{ v_{\overline{S_1}}, 
     v_{\overline{S_2}},\dots,v_{\overline{S_k}} 
	 \}, \]
where $S \subseteq S_1 \subset S_2 \subset \cdots 
\subset S_k \subset B$ and $k \ge 1$, or $S 
\subseteq B$ and $k = 0$. Clearly, $\Omega_B$ is 
a subcomplex of $\Delta_{\M}$ which is 
combinatorially isomorphic to $\Omega_V$ for any 
$d$-element set $V$. 

\begin{proof}[Proof of Theorem~\ref{thm:main}]
Let $B_1, B_2,\dots,B_m$ be the bases of $\M$, 
sorted in the lexicographic order. Then, the 
augmented Bergman complex
$\Delta_{\M}$ is the union of its subcomplexes 
$\Omega_{B_i}$ for $i \in [m]$, each of which is 
combinatorially isomorphic to the boundary complex 
of the $d$-dimensional simplicial stellohedron. 
By the relevant definitions, for $2 \le k \le m$ 
we have
\begin{equation} \label{eq:intersect}
  \Omega_{B_k} \cap \left( \, \bigcup_{i<k} 
  \Omega_{B_i} \right) = \Omega_{B_k} (\Gamma_k), 
\end{equation}
where $\Gamma_k$ is as in 
Proposition~\ref{prop:NS04}. Combined with that, 
Lemma~\ref{lem:shellability-a} implies that 
$\Omega_{B_k} (\Gamma_k)$ is a (shellable) 
$(d-1)$-dimensional ball. Hence, by Remark~\ref{rem:ced}, 
the sequence $(\Omega_{B_1}, \Omega_{B_2},\dots,\Omega_{B_m})$ 
of polytopal simplicial spheres induces a convex ear 
decomposition of $\Delta_{\M}$. 
\end{proof}

\section{On unimodality and log-concavity}
\label{sec:log-concavity}

This section shows that $f$- and $h$-vectors of 
augmented Bergman complexes are not well behaved 
with respect to unimodality and log-concavity.
Let $p(x) = a_0 + a_1x + \cdots + a_d x^d$ be a 
polynomial of degree at most $d$ with nonnegative 
coefficients. We recall that $p(x)$ is called 
\begin{itemize}
\item[$\bullet$] 
  \emph{unimodal}, if $a_0 \le a_1 \le \cdots \le 
	a_k \ge a_{k+1} \ge \cdots \ge a_d$ for some $k \in
	\{0, 1,\dots,d\}$,
\item[$\bullet$] 
  \emph{log-concave}, if $a_i^2 \ge a_{i-1}a_{i+1}$ 
	for every $i \in [d-1]$,
	\item[$\bullet$] 
  \emph{ultra log-concave}, of order $m \ge d$, if
	\[ \frac{a_i^2}{\binom{m}{i}^2} \ge \frac{a_{i-1}}
	   {\binom{m}{i-1}} \cdot \frac{a_{i+1}}
		 {\binom{m}{i+1}} \]
  for every $i \in [d-1]$,
\item[$\bullet$] 
  \emph{real-rooted}, if every root of $p(x)$ is real, 
	or $p(x) \equiv 0$.
\end{itemize}
This is a hierarchy of increasingly stronger properties, 
where the implication $\text{log-concave} \Longrightarrow 
\text{unimodal}$ requires that $p(x)$ has no internal zeros, 
meaning there are no indices $i < j < k$ such that $a_i a_k 
\ne 0$ and $a_j = 0$. We refer to surveys by Stanley 
\cite{stanley-unimodality} and Br\"and\'en 
\cite{Bra15} for more information about unimodality, 
log-concavity and real-rootedness. We also point to the work 
of Liggett \cite{Lig97} and Br\"and\'en and Huh \cite{BH20}
for ultra log-concavity and multivariate generalizations.

Let $\M$ be a matroid on $n$ elements. The independence
complex $\I_{\M}$ was shown to have an ultra log-concave 
$f$-polynomial of order $n$ in \cite{BH20,ALOV24}
and a log-concave $h$-polynomial in \cite{AHK18}. A
stronger property has been conjectured for the 
$h$-polynomial of the Bergman complex.

\begin{conjecture}[\cite{AK23}] \label{conj:AKE}
The $f$-polynomial (equivalently, the $h$-polynomial) of 
$\uDelta_{\M}$ is real-rooted for every matroid $\M$.
\end{conjecture}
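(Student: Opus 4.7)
The plan is to attempt induction on the rank $d$ of $\M$. The base cases $d \le 2$ are trivial, since $h(\uDelta_\M,x)$ has degree at most one. For the inductive step, the guiding principle is a decomposition of chains in $\lL(\M) \sm \{\widehat{0},\widehat{1}\}$ by their bottom element: fixing a proper nontrivial flat $F$ as the minimum of a chain, the remaining data is a chain in $\lL(\M/F) \sm \{\widehat{0},\widehat{1}\}$. This yields a recursion schematically of the form
\[
h(\uDelta_\M,x) \;=\; 1 \;+\; x \sum_{F} h\bigl(\uDelta_{\M/F},x\bigr),
\]
reminiscent of the decomposition exploited in Proposition~\ref{prop:hOmega}. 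By induction each $h(\uDelta_{\M/F},x)$ is real-rooted, and the conjecture would follow if these summands (suitably normalized) form an interlacing family in the sense of Chudnovsky--Seymour.

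I would therefore strengthen the inductive hypothesis to track, for each matroid, a family $\{P_F(\M,x)\}_{F}$ of polynomials indexed by flats of a fixed rank (most naturally the atoms), satisfying: (i) each $P_F$ is real-rooted; (ii) any two $P_F, P_{F'}$ share a common interlacer; and (iii) $h(\uDelta_\M,x)$ is a nonnegative linear combination of the $P_F$'s. The classical lemma that a compatible family sums to a real-rooted polynomial would then close the induction, provided one can propagate the compatibility from $\M$ to all contractions $\M/F$. A natural candidate is $P_F(\M,x) = h(\uDelta_{\M/F},x)$, possibly weighted by a factor coming from the characteristic polynomial of the flat lattice below $F$ so as to restore symmetric or near-symmetric behavior compatible with the tools of Br\"and\'en, Haglund--Ou--Wagner, and Visontai.

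The main obstacle, and the reason the conjecture remains open, lies in verifying interlacing between contractions $\M/F$ and $\M/F'$ associated to distinct flats of the same rank. These contractions can differ in rank, in the structure of their lattices of flats, and in the number of atoms, so no a priori symmetry forces their $h$-polynomials to interlace. A successful approach would likely require either deep algebraic input---such as an upgrade of the K\"ahler package for matroid Chow rings (Adiprasito--Huh--Katz, Braden--Huh--Matherne--Proudfoot--Wang) to a Lorentzian or interlacing-preserving structure---or the identification of a combinatorial model, for example a permutation statistic on maximal chains of $\lL(\M)$, whose generating polynomial is real-rooted by classical tools such as $P$-partitions or compatible orderings on the weak order. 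Pending such a breakthrough, reasonable intermediate targets are to verify the interlacing compatibility directly for structured subclasses (supersolvable geometric lattices, modular matroids, or graphic matroids of chordal graphs), where the recursion simplifies enough that the family $\{P_F\}$ can be written down explicitly and compared.
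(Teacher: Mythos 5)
The statement you were asked to prove is recorded in the paper as a \emph{conjecture} (due to Athanasiadis and Kalampogia-Evangelinou), not a theorem: the paper offers no proof and explicitly notes that even the weaker log-concavity statement is open. Your proposal, as you yourself concede, is likewise not a proof. The decisive step --- that the polynomials attached to the various contractions $\M/F$ form a compatible (common-interlacer) family so that their nonnegative combination is real-rooted --- is precisely the open content of the problem, and naming that missing lemma is not the same as supplying it. So there is a genuine gap, and it is the one you identify; nothing in the outline closes it.

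Two concrete corrections to the outline itself. First, the recursion you write holds for the $f$-polynomial but not for the $h$-polynomial: decomposing chains of $\lL(\M) \sm \{\widehat{0},\widehat{1}\}$ by their bottom element $F$ gives $f(\uDelta_{\M},x) = 1 + x\sum_{F} f(\uDelta_{\M/F},x)$, whereas in the $h$-normalization the empty chain contributes $(1-x)^{d-1}$ and each summand acquires a factor $(1-x)^{\rk(F)-1}$, destroying the ``nonnegative linear combination'' structure; and you cannot restrict the sum to atoms, since chains whose minimum has rank at least two are not captured. (Since $f$ and $h$ are related by a M\"obius substitution, real-rootedness transfers, so you should run the argument on $f$.) Second, contractions $\M/F$ by flats $F$ of a fixed rank $r$ all have rank $d-r$, so your concern that they ``differ in rank'' is misplaced; the real obstruction is that no structural symmetry forces $f(\uDelta_{\M/F},x)$ and $f(\uDelta_{\M/F'},x)$ to have a common interlacer. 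Finally, the intermediate targets you propose at the end are essentially the special cases already verified in the cited work, so the proposal does not advance beyond the known state of the conjecture.
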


This conjecture, if true, implies that both $f$- and 
$h$-polynomials of Bergman complexes are ultra log-concave 
of any order, and in particular log-concave. These weaker 
properties are also open. 

One may suspect that $f$- and $h$-polynomials of augmented 
Bergman complexes also satisfy some version of log-concavity.
The following example rules out that possibility.

\begin{example}
Consider the uniform matroid $\U_{4,189}$. The $f$-vector 
of its augmented Bergman complex can be computed explicitly 
via Example~\ref{ex:uniform}:
\[ f(\Delta_{\U_{4,189}}) = (1, 1125559, 11181114, 27774180, 
                             69213375).\]
This $f$-vector is not log-concave, because
\[ 27774180^2 = 771405074672400  < 773882636199750 = 11181114 
   \cdot 69213375. \]
This example implies that $h$-vectors of augmented Bergman 
complexes are not always log-concave. In fact, the situation 
is even worse, as these $h$-vectors can even fail to be 
unimodal. For the uniform matroid $\U_{5,83}$ we have
\[ h(\Delta_{\U_{5,83}}) = (1, 1933066, 28121900, 60710014, 
                            28680319, 29034396).\]
The reader may observe that $60710014 > 28680319 < 29034396$. 
We have not been able to find a non-unimodal $f$-vector.
\qed
\end{example}

\begin{remark}
We have shown that $\Delta_{\M}$ is doubly Cohen--Macaulay. 
The first named author and Tzanaki have posed the question 
\cite[Question~7.2]{AT21} whether the $h$-vector of any 
$(d-1)$-dimensional doubly Cohen--Macaulay simplicial 
complex $\Delta$ satisfies the inequalities
\begin{equation}\label{eq:ineq-2cm}
\frac{h_0(\Delta)}{h_{d}(\Delta)} \le \frac{h_1(\Delta)}
     {h_{d-1}(\Delta)} \le \frac{h_2(\Delta)}{h_{d-2}(\Delta)} 
		 \le \cdots \le \frac{h_d(\Delta)}{h_{0}(\Delta)}.
\end{equation}
These inequalities imply that $h_i(\Delta) \le h_{d-i}
(\Delta)$ for $0 \le i \le d/2$. We have not found any 
counterexamples among $h$-vectors of augmented Bergman 
complexes. 
\end{remark}

\section{A connection to augmented Chow polynomials}
\label{sec:chow}

Hameister, Rao and Simpson conjectured a surprising relation between 
the $h$-vectors of Bergman complexes and the Hilbert--Poincar\'e 
series of Chow rings of uniform matroids \cite[Conjecture~6.2]{HRS21}. 
Their conjecture was proved recently by Ferroni, Matherne, Stevens 
and Vecchi \cite[Corollary~3.17]{FMSV24}. Specifically, they 
established the following formula. 

\begin{theorem}[\cite{FMSV24}]\label{thm:hrs-identity}
The $h$-polynomial of the Bergman complex of $\U_{d,n}$ can be 
expressed as
\[ x^{d-1} h(\uDelta_{\U_{d,n}},x^{-1}) = \sum_{i=1}^d 
   \binom{n-i-1}{d-i} \, \uH_{\U_{i,n}}(x), \]
where $\uH_{\M}(x)$ stands for the Hilbert--Poincar\'e series of the 
Chow ring of $\M$.
\end{theorem}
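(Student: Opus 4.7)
My plan is a double induction, on the rank $d$ and on the ground-set size $n$, matching recursions produced on each side. On the right-hand side the main tool is the \emph{semi-small decomposition} of the Chow ring due to Braden--Huh--Matherne--Proudfoot--Wang \cite{BHMPW1}, which for $\U_{i,n}$, after deleting a distinguished element $e$, takes the shape
\[ \uH_{\U_{i,n}}(x) \;=\; \uH_{\U_{i,n}\setminus e}(x) \;+\; x\!\!\sum_{F}\uH_{(\U_{i,n}\setminus e)|_F}(x)\,\uH_{(\U_{i,n}\setminus e)/F}(x), \]
where $F$ ranges over a suitable family of flats; since every minor of a uniform matroid is uniform, this expresses $\uH_{\U_{i,n}}(x)$ in terms of $\uH_{\U_{j,m}}(x)$ with $m<n$ or $j<i$. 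On the left-hand side, the description $h(\uDelta_{\U_{d,n}},x)=\sum_{w\in\fS_n,\,\Des(w)\subseteq[d-1]} x^{\des(w)}$ drawn from Example~\ref{ex:uniform-h} yields an analogous recursion in $n$ by conditioning on the position of the letter $n$ inside a descent-restricted permutation.

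Concretely, I would substitute the semi-small recursion into each summand $\binom{n-i-1}{d-i}\,\uH_{\U_{i,n}}(x)$ on the right and regroup the resulting terms according to the minor $\uH_{\U_{j,m}}(x)$ they produce. The Pascal-type identity
\[ \binom{n-i-1}{d-i} \;=\; \binom{n-i-2}{d-i}\;+\;\binom{n-i-2}{d-i-1} \]
is perfectly shaped for such a telescoping: one half assembles into the desired identity for $(d,n-1)$, and the remaining cross terms should match, coefficient by coefficient, the contribution produced on the $h$-polynomial side when the letter $n$ is inserted or moved. The base cases $d=1$ (both sides equal $1$) and $d=2$ (one checks $(n-1)+x=(n-2)+\uH_{\U_{2,n}}(x)$ with $\uH_{\U_{2,n}}(x)=1+x$) are straightforward and fix the normalization of the induction.

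The main obstacle I expect is that the semi-small sum produces a \emph{product} $\uH_{(\U_{i,n}\setminus e)|_F}(x)\,\uH_{(\U_{i,n}\setminus e)/F}(x)$, whereas the target identity is linear in the $\uH$'s. Reconciling this requires first applying the inductive hypothesis to re-express the right factor via $h$-polynomials of Bergman complexes of smaller uniform matroids, and then recognizing the resulting convolution as exactly the recursion one gets for $x^{d-1}h(\uDelta_{\U_{d,n}},x^{-1})$ by isolating the role of the letter $n$. The combinatorial identity gluing the two sides will reduce to a sum-of-binomials identity in $n$, $d$, $i$, which is the part I would need to verify most carefully; if a direct proof proves unwieldy, a fallback would be to use the Feichtner--Yuzvinsky presentation of the Chow ring to obtain an explicit closed form for $\uH_{\U_{i,n}}(x)$ and then check the identity coefficient by coefficient as a polynomial identity in $n$ for each fixed degree in $x$.
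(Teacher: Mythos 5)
First, a point of comparison: the paper does not prove this statement. Theorem~\ref{thm:hrs-identity} is imported from \cite{FMSV24} (Corollary~3.17 there), so there is no internal proof to measure your argument against; you are supplying a proof of a cited result.

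What you have written is a plan with the decisive step left unverified, so it does not yet constitute a proof. The semi-small decomposition expresses $\uH_{\U_{i,n}}(x)$ through \emph{products} $\uH_{(\U_{i,n}\setminus e)|_F}(x)\,\uH_{(\U_{i,n}\setminus e)/F}(x)$, whereas the target identity is linear in the Chow polynomials. You flag this yourself, and your proposed resolution --- apply the inductive hypothesis to one factor, then ``recognize the resulting convolution'' as the recursion for $x^{d-1}h(\uDelta_{\U_{d,n}},x^{-1})$ obtained by tracking the letter $n$ --- is exactly where the content of the theorem lives, and it is reduced to ``a sum-of-binomials identity in $n$, $d$, $i$'' that is never stated, let alone proved. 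You also do not write down the recursion on the Bergman side that the cross terms are supposed to match. Until the gluing identity is made explicit and verified, the inductive step is only conjectured to close. (To your credit, the base cases $d=1,2$ are checked correctly, and the Pascal split $\binom{n-i-1}{d-i}=\binom{n-i-2}{d-i}+\binom{n-i-2}{d-i-1}$ is the right shape for the telescoping you envision, so the skeleton is plausible.) Your own fallback is actually the more tractable route here: for uniform matroids both sides are completely explicit --- $h(\uDelta_{\U_{d,n}},x)=\sum_{w\in\fS_n,\ \Des(w)\subseteq[d-1]}x^{\des(w)}$ on the left, and a closed form for $\uH_{\U_{i,n}}(x)$ (from the Feichtner--Yuzvinsky presentation, or from \cite{FMSV24}) on the right --- so the identity can be settled coefficientwise as a binomial identity, avoiding the product-versus-linear mismatch entirely. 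I would recommend either carrying out that direct verification or, if you keep the induction, stating and proving the combinatorial identity on which it hinges.
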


This section provides an analogue of this theorem for augmented Bergman 
complexes. We emphasize that, a priori, there are no reasons to expect 
such a concise relation between these two classes of polynomials and it 
is surprising that it continues to hold true in the augmented setting.

We assume familiarity with basic notions about Chow rings of 
matroids and their Hilbert--Poincar\'e series; our notation here follows 
\cite{FMSV24} closely, and we refer to that paper for any undefined 
terminology. One important distinction between the Chow polynomial and 
the augmented Chow polynomial is their definition on matroids with loops. 
In general, the Chow polynomial is defined to be zero if the matroid has 
a loop, whereas the augmented Chow polynomial of a matroid with loops 
does not change if one removes the loops.

\begin{theorem}
The $h$-vector of the augmented Bergman complex of $\U_{d,n}$ can be 
expressed as
\[ x^{d} h(\Delta_{\U_{d,n}},x^{-1}) = \sum_{i=0}^d \binom{n-i-1}{d-i} \,
   \mathrm{H}_{\U_{i,n}}(x), \]
where $\mathrm{H}_{\M}(x)$ stands for the Hilbert--Poincar\'e series of the augmented 
Chow ring of $\M$.
\end{theorem}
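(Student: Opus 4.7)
The plan is to combine Proposition~\ref{prop:hOmega} (specialized to $\U_{d,n}$) with Theorem~\ref{thm:hrs-identity} and a standard decomposition of the augmented Chow polynomial over the lattice of flats; the rest is bookkeeping with binomial sums.

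First, I would specialize Proposition~\ref{prop:hOmega} to $\U_{d,n}$. The proper flats of $\U_{d,n}$ are precisely the subsets $F \subseteq [n]$ with $|F| \leq d-1$, and for such $F$ one has $\U_{d,n}|_F \cong \U_{|F|,|F|}$ (whose independence complex is a simplex, so $h(\I_{\U_{|F|,|F|}},x) = 1$) and $\U_{d,n}/F \cong \U_{d-|F|,n-|F|}$. The formula thus reduces to
\[ h(\Delta_{\U_{d,n}}, x) = h(\I_{\U_{d,n}}, x) + x \sum_{k=0}^{d-1} \binom{n}{k} h(\uDelta_{\U_{d-k,n-k}}, x). \]
Multiplying by $x^d$, substituting $x \mapsto x^{-1}$, and using that $h(\uDelta_{\U_{d-k,n-k}},x)$ has degree at most $d-k-1$, one obtains
\[ x^d h(\Delta_{\U_{d,n}}, x^{-1}) = x^d h(\I_{\U_{d,n}}, x^{-1}) + \sum_{k=0}^{d-1} \binom{n}{k} x^k \bigl(x^{d-k-1} h(\uDelta_{\U_{d-k,n-k}}, x^{-1})\bigr). \]

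Next, I would apply Theorem~\ref{thm:hrs-identity} to each reversed Bergman $h$-polynomial, and set $j = i + k$ to swap the resulting double sum. The Bergman contribution then rewrites as
\[ \sum_{j=1}^d \binom{n-j-1}{d-j} \sum_{k=0}^{j-1} \binom{n}{k} x^k \uH_{\U_{j-k,n-k}}(x). \]
At this point I invoke the flat decomposition of the augmented Chow polynomial,
\[ \mathrm{H}_\M(x) = \sum_{F \in \lL(\M)} x^{\rk F} \uH_{\M/F}(x), \]
a standard consequence of the semi-small decomposition of the augmented Chow ring from \cite{BHMPW2}; specialized to $\U_{j,n}$, this reads $\mathrm{H}_{\U_{j,n}}(x) = x^j + \sum_{k=0}^{j-1} \binom{n}{k} x^k \uH_{\U_{j-k,n-k}}(x)$, so that the inner sum above equals $\mathrm{H}_{\U_{j,n}}(x) - x^j$.

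A short computation from $f_{i-1}(\I_{\U_{d,n}}) = \binom{n}{i}$ gives $h(\I_{\U_{d,n}},x) = \sum_{k=0}^d \binom{n-d+k-1}{k} x^k$, and hence $x^d h(\I_{\U_{d,n}},x^{-1}) = \sum_{j=0}^d \binom{n-j-1}{d-j} x^j$. Substituting everything back, the $x^j$-contributions for $j = 1,\ldots,d$ cancel against the corresponding terms in the independence part, leaving only the $j = 0$ residual $\binom{n-1}{d}$, which coincides with $\binom{n-1}{d}\mathrm{H}_{\U_{0,n}}(x)$ and supplies the missing $i = 0$ term on the right-hand side of the claim. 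The only step of the proof not amounting to straightforward algebra is the flat decomposition $\mathrm{H}_\M(x) = \sum_F x^{\rk F} \uH_{\M/F}(x)$ itself, which is the real content borrowed from the Hodge theory of augmented Chow rings.
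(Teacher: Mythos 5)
Your proof is correct and uses exactly the same three ingredients as the paper's argument --- Proposition~\ref{prop:hOmega} specialized to $\U_{d,n}$ (i.e.\ Example~\ref{ex:uniform-h}), Theorem~\ref{thm:hrs-identity}, and the flat decomposition $\H_{\M}(x)=\sum_{F}x^{\rk(F)}\uH_{\M/F}(x)$ from \cite{FMSV24} --- merely run in the opposite direction (starting from the left-hand side rather than expanding the right-hand side). The reindexing $j=i+k$ and the cancellation of the $x^j$ terms against the reversed independence-complex $h$-polynomial match the paper's computation, so this is essentially the same proof.
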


\begin{proof}
We will employ the following formula, established in \cite[Theorem~1.3~(iii)]{FMSV24}:
\[ \H_{\M}(x) = \sum_{F\in\mathcal{L}(\M)} x^{\rk(F)} \, \uH_{\M / F}(x). \]
By applying this identity to uniform matroids, we have the following chain of 
equalities:
\begin{align*}
\sum_{i=0}^d \binom{n-i-1}{d-i} \H_{\U_{i,n}}(x) &= 
\sum_{i=0}^d \binom{n-i-1}{d-i} 
\left(x^i + \sum_{j=0}^{i-1}\binom{n}{j} x^j \,\uH_{\U_{i-j,n-j}}(x)\right) \\
        &= \sum_{i=0}^d \binom{n-i-1}{d-i}\,x^i + \sum_{i=0}^{d}
				\sum_{j=0}^{i-1}\binom{n-i-1}{d-i}\binom{n}{j}x^j\,\uH_{\U_{i-j,n-j}}.
\intertext{Changing the order of summation in the second term and reindexing,
           we get} \\
\sum_{i=0}^d \binom{n-i-1}{d-i} \H_{\U_{i,n}}(x) &= 
\sum_{i=0}^d \binom{n-i-1}{d-i}\,x^i + \sum_{j=0}^{d-1}\binom{n}{j}x^j
\sum_{i=j+1}^{d}\binom{n-i-1}{d-i}\,\uH_{\U_{i-j,n-j}} \\
        &=\sum_{i=0}^d \binom{n-i-1}{d-i}\,x^i + 
				\sum_{j=0}^{d-1}\binom{n}{j}x^j\sum_{i=1}^{d-j}
				\binom{n-j-i-1}{d-j-i}\,\uH_{\U_{i,n-j}}.
\intertext{We now apply the formula of Theorem~\ref{thm:hrs-identity} to get}
\sum_{i=0}^d \binom{n-i-1}{d-i} \H_{\U_{i,n}}(x) &= 
\sum_{i=0}^d \binom{n-i-1}{d-i}\,x^i + \sum_{j=0}^{d-1}\binom{n}{j}x^j \, 
\left(x^{d-j-1} h(\uDelta_{\U_{d-j,n-j}},x^{-1})\right)\\ &= 
\sum_{i=0}^d \binom{n-i-1}{d-i}\,x^i + \sum_{j=0}^{d-1}
\binom{n}{j} x^{d-1} h(\uDelta_{\U_{d-j,n-j}},x^{-1}).
\end{align*}

Finally, we reverse the order of the coefficients and compare to the 
identity of Example~\ref{ex:uniform-h} and the proof follows.
\end{proof}

\section*{Acknowledgments}

During the early stages of this project, Luis Ferroni was a member at 
the Institute for Advanced Study, 
supported by the Minerva Research Foundation. The authors 
wish to thank Petter Br\"and\'en and Ed Swartz for useful 
conversations, and an anonymous referee for detailed comments that 
improved the exposition of this paper.

\newcommand{\etalchar}[1]{$^{#1}$}
\providecommand{\bysame}{\leavevmode\hbox to3em{\hrulefill}\thinspace}
\providecommand{\MR}{\relax\ifhmode\unskip\space\fi MR }
\providecommand{\MRhref}[2]{%
  \href{http://www.ams.org/mathscinet-getitem?mr=#1}{#2}
}
\providecommand{\href}[2]{#2}

\end{document}